\newcolumntype{M}[1]{>{\centering\arraybackslash}m{#1}}
\newtheorem{theorem}{Theorem}
\newtheorem{proposition}{Proposition}
\newtheorem{corollary}{Corollary}
\newtheorem*{remark*}{Remark}
\begin{document}
\title{The number of polyiamonds is supermultiplicative}
\author{Vuong Bui}
\address{Vuong Bui, Institut f\"ur Informatik, Freie Universit\"{a}t
Berlin, Takustra{\ss}e~9, 14195 Berlin, Germany, LIRMM, Universit\'e de Montpellier, 161 Rue Ada, 34095 Montpellier, France, and UET, Vietnam National University, Hanoi, 144 Xuan Thuy Street, Hanoi 100000, Vietnam}
\email{bui.vuong@yandex.ru}
\thanks{Part of the work was supported by the Deutsche Forschungsgemeinschaft
(DFG) Graduiertenkolleg ``Facets of Complexity'' (GRK 2434).}

\begin{abstract}
	While the number of polyominoes is known to be supermultiplicative by a simple concatenation argument, it is still unknown whether the same applies to polyiamonds. This article proves that if $\ell,m$ are not both $1$, then $T(\ell+m)\ge T(\ell)T(m)$, for which one can say that the number of polyiamonds $T(n)$ is supermultiplicative. The method is, however, by concatenating, merging and adding cells at the same time. A corollary is an increment of the best known lower bound on the growth constant from $2.8423$ to $2.8578$.
\end{abstract}
\maketitle
\section{Introduction}
A \emph{polyomino} is an edge-connected set of cells on the square lattice. It has become popular in literature, as a theoretical topic with practical applications as well as a recreational medium. Meanwhile, a \emph{polyiamond}, which is our main interest in this article, is an edge-connected set of cells on the triangular lattice. While the cells of the square lattice are identical up to a translation, there are two types of cells for the triangular lattice, as depicted in Fig.~\ref{fig:triangular-lattice}.

\begin{figure}[ht]
	\includegraphics[width=0.25\textwidth]{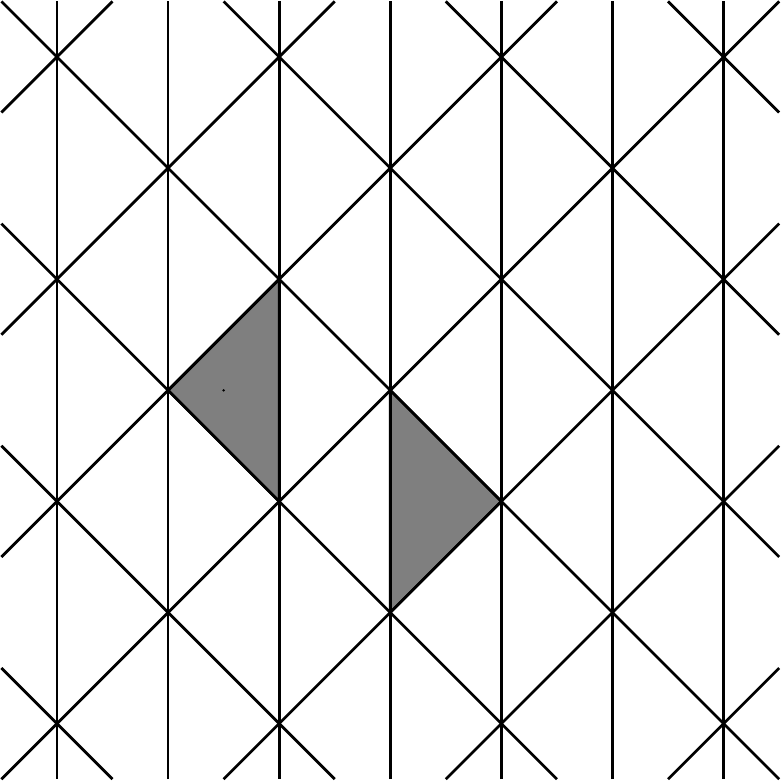}
	\caption{A triangular lattice with two polyiamonds of $1$ cell}
	\label{fig:triangular-lattice}
\end{figure}

Two lattice animals are said to be equivalent if one of them is a translate of the other. On the other hand, we say two animals are distinct if they are not equivalent. We denote by $S(n), T(n)$ the number of all distinct animals with $n$ cells for the square lattice and the triangular lattice, respectively. They are also called the number of \emph{fixed} lattice animals in literature, as there are other variants allowing extra operations, e.g. rotation, reflection, instead of only translation. In the sequel, we always mean fixed lattice animals by lattice animals. 

Some beginning values of $T(n)$ are given in Table \ref{tbl:values}, with the actual polyiamonds for $n=1,2,3$ depicted in Fig.~\ref{fig:polyiamond-counts}. Other values of $T(n)$ for $n\le 75$ can be found in Page $479$ of \cite{jacobsen2009polygons}. The sequence $T(n)$ is also known as the sequence $A001420$ in The On-Line Encyclopedia of Integer Sequences. 

\begin{table}[ht]
	\centering
	\begin{tabular}{ |c|c|c|c|c|c|c|c|c|c|c|c|c|c|c| } 
		\hline 
		$n$ & 1 & 2 & 3 & 4 & 5 & 6 & 7 & 8 & 9 & 10 & 11 & 12 & 13 & 14\\
		\hline
		$T(n)$ & 2 & 3 & 6 & 14 & 36 & 94 & 250 & 675 & 1838 & 5053 & 14016 & 39169 & 110194 & 311751\\
		\hline
	\end{tabular}
	\caption{Some beginning values of $T(n)$}
	\label{tbl:values}
\end{table}

\begin{figure}[ht]
	\includegraphics[width=0.5\textwidth]{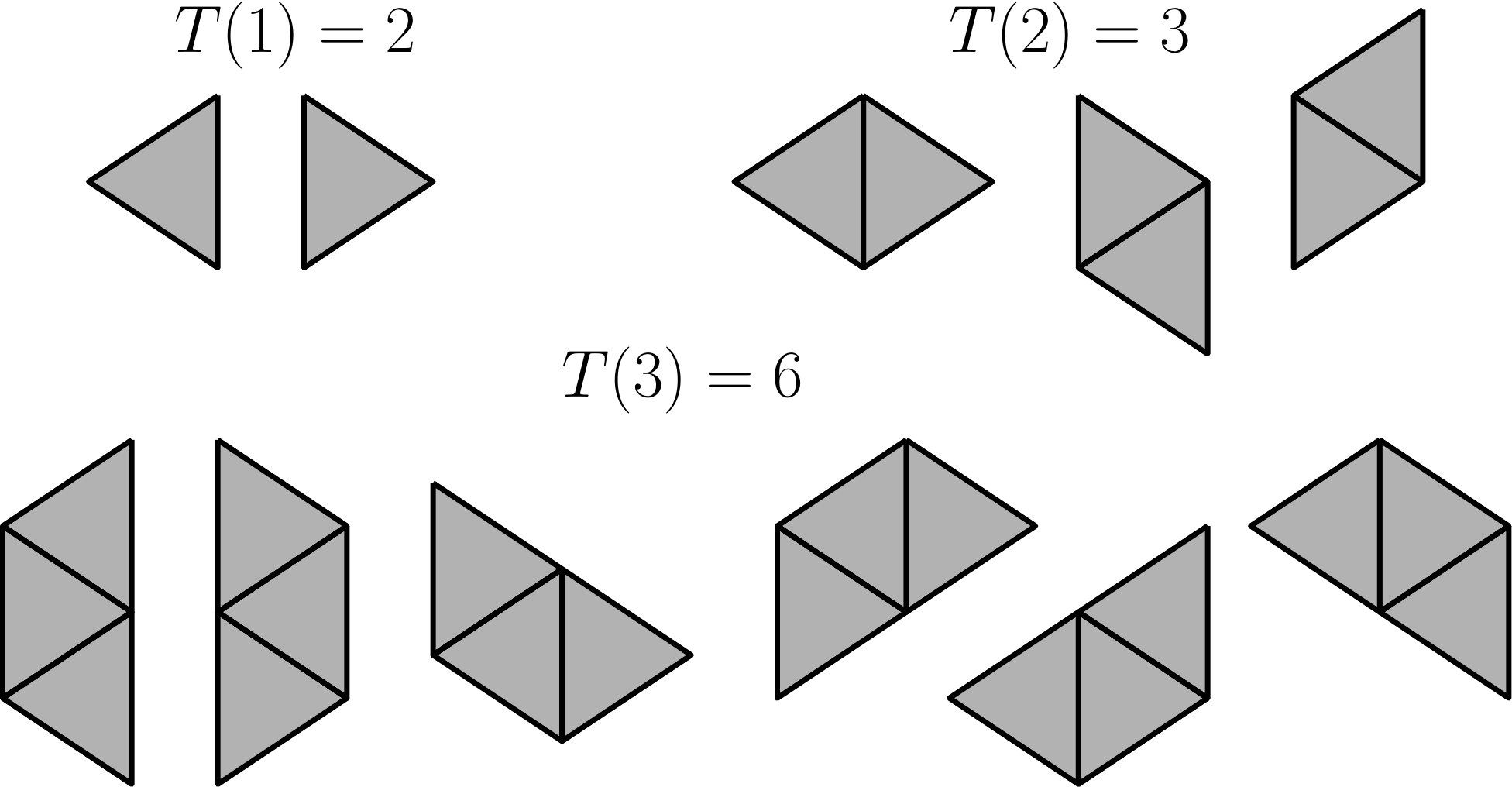}
	\caption{Polyiamonds of $1,2,3$ cells}
	\label{fig:polyiamond-counts}
\end{figure}

\subsection*{Growth constants}
The number of the polyominoes with $n$ cells grows exponentially with a growth constant represented by the limit
\[
	\lambda_S = \lim_{n\to\infty} \sqrt[n]{S(n)}.
\]
The constant is also known as Klarner's constant, since Klarner first gave a proof of the limit by observing that $S(n)$ is supermultiplicative \cite{klarner1967cell}. We remind that a sequence $s_n$ is said to be \emph{supermultiplicative} if $s_{\ell+m}\ge s_\ell s_m$ for any $\ell,m\ge 1$. If $s_n$ is positive and supermultiplicative, then the limit of $\sqrt[n]{s_n}$ exists by Fekete's lemma \cite{fekete1923verteilung}. Note that in principle the limit $\sqrt[n]{s_n}$ can be infinite but in our case we have $S(n)\le 6.75^n$ by a result of Eden \cite{eden1961two} using an encoding technique. In other words, the limit of $\lambda_S$ exists and is finite.

As for the case of polyiamonds, we do not know yet whether $T(n)$ is supermultiplicative. In fact, as far as to the awareness of the author, the only paper that really attempts to prove the limit for polyiamonds is \cite{madras1999pattern}, where it is shown that: Given a lattice with the number $L(n)$ of lattice animals with $n$ cells, we have the growth constant
\[
	\lambda_L = \lim_{n\to\infty} \frac{L(n+1)}{L(n)}.
\]
Note that the result is actually stronger than the existence of the limit $\sqrt[n]{L(n)}$. However, the techniques in use, often known under the name ``pattern theorems'', are quite involved. Therefore, we still desire an easier way to prove the limit
\[
	\lambda_T = \lim_{n\to\infty} \sqrt[n]{T(n)}.
\]
Ideally, it would be something like a concatenation argument as in the case of polyominoes. In fact, we give such a way in Proposition \ref{prop:proving-limit}.

\subsection*{Concatenation arguments and lower bounds}
Before reminding the proof of the supermultiplicativity of $S(n)$ using the concatenation argument, let us define a \emph{lexicographic order} on the cells of a polyomino/polyiamond: Cell $c_1$ is said to be smaller than cell $c_2$ if (i) $c_1$ is on a column to the left of the column of $c_2$, or (ii) both $c_1, c_2$ are on the same column and $c_1$ is below $c_2$. 

With the lexicographic order, we are ready to show the supermultiplicativity of $S(n)$: For every pair of polyominoes $A,B$ with $\ell,m$ cells, respectively, we can give a unique polyomino of $\ell+m$ cells by translating them so that the largest cell of $A$ is adjacent to the left of the smallest cell of $B$. For example, Fig.~\ref{fig:polyomino-concatenation} depicts a case with $A$ in the dark color and $B$ in white.

\begin{figure}[ht]
	\includegraphics[width=0.35\textwidth]{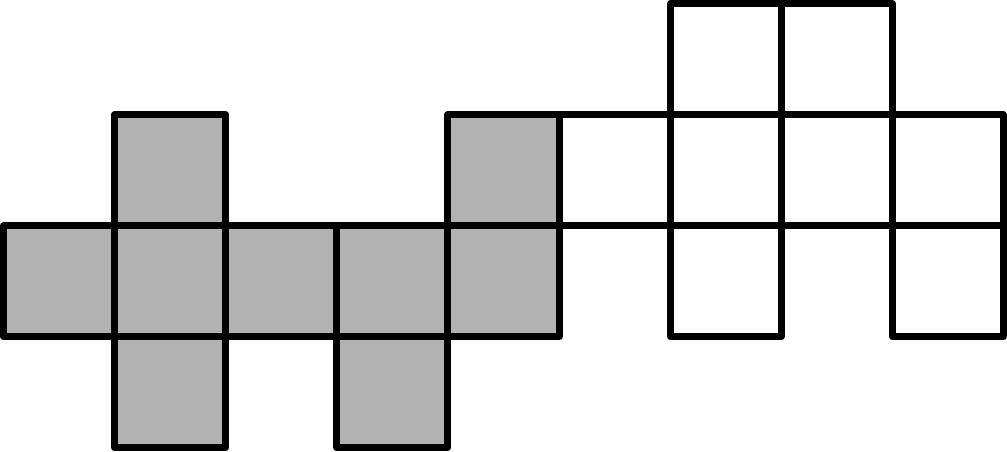}
	\caption{A concatenation of two polyominoes}
	\label{fig:polyomino-concatenation}
\end{figure}

Note that the supermultiplicativity of $S(n)$ does not only give the existence of the limit $\lambda_S$ but also concludes that $\lambda_S=\sup_n \sqrt[n]{S(n)}$ by Fekete's lemma. In other words, if we have the value of $S(n)$ for some $n$, we obtain a lower bound $\sqrt[n]{S(n)}\le \lambda_S$. Note that we do not really need the corollary $\lambda_S=\sup_n \sqrt[n]{S(n)}$ of Fekete's lemma to see that $\sqrt[n]{S(n)}$ is a lower bound. Indeed, we have the infinite sequence $S(n),S(2n), S(3n),\dots$ with the growth rate at least $\sqrt[n]{S(n)}$, since it follows from the supermultiplicativity that $S(kn)\ge S(n)S((k-1)n)\ge S(n)[S(n)]^{k-1} = [S(n)]^k$ for every $k\ge 2$ by induction. In fact, we even have a nondecreasing sequence $\sqrt[n]{S(n)}, \sqrt[2n]{S(2n)}, \sqrt[4n]{S(4n)}, \sqrt[8n]{S(8n)},\dots$ since $\sqrt[2m]{S(2m)}\ge \sqrt[2m]{[S(m)]^2} = \sqrt[m]{S(m)}$ for every $m$.

\subsection*{Difficulties with polyiamonds and previous approaches}
While proving the supermultiplicativity of the number of polyominoes is so straightforward with a simple concatenation argument, we will explain why it is not so simple for the case of polyiamond. At first, we revise the two types of polyiamonds characterized by the type of triangle of the largest cell, as depicted by the two top polyiamonds in Fig.~\ref{fig:polyiamond-types}. Let us denote the types by $\RHD$ and $\LHD$, with respect to the direction of the triangle. We also denote the number of polyiamonds of these types by $T_\RHD(n)$ and $T_\LHD(n)$, respectively. If we characterize by the smallest cells, we have the two bottom polyiamonds in Fig.~\ref{fig:polyiamond-types}. We denote the types by $\lhd$ and $\rhd$, and the numbers of polyiamonds of them by $T_\lhd(n), T_\rhd(n)$, in the same manner. The article \cite{barequet2019improved} observes the following relations:
\[
	T_\RHD(n)=T_\lhd(n),\qquad T_\LHD(n)=T_\rhd(n),
\]
as a polyiamond of type $\RHD$ is a reflex of a polyiamond of type $\lhd$, and likewise for the pair $\LHD,\rhd$. Also, we have
\[
	T_\LHD(n)=T_\RHD(n-1),
\]
because the cell $\LHD$, which is the largest cell, needs another cell $\RHD$ just below it to connect to the remaining cells. 

\begin{figure}[ht]
	\includegraphics[width=0.3\textwidth]{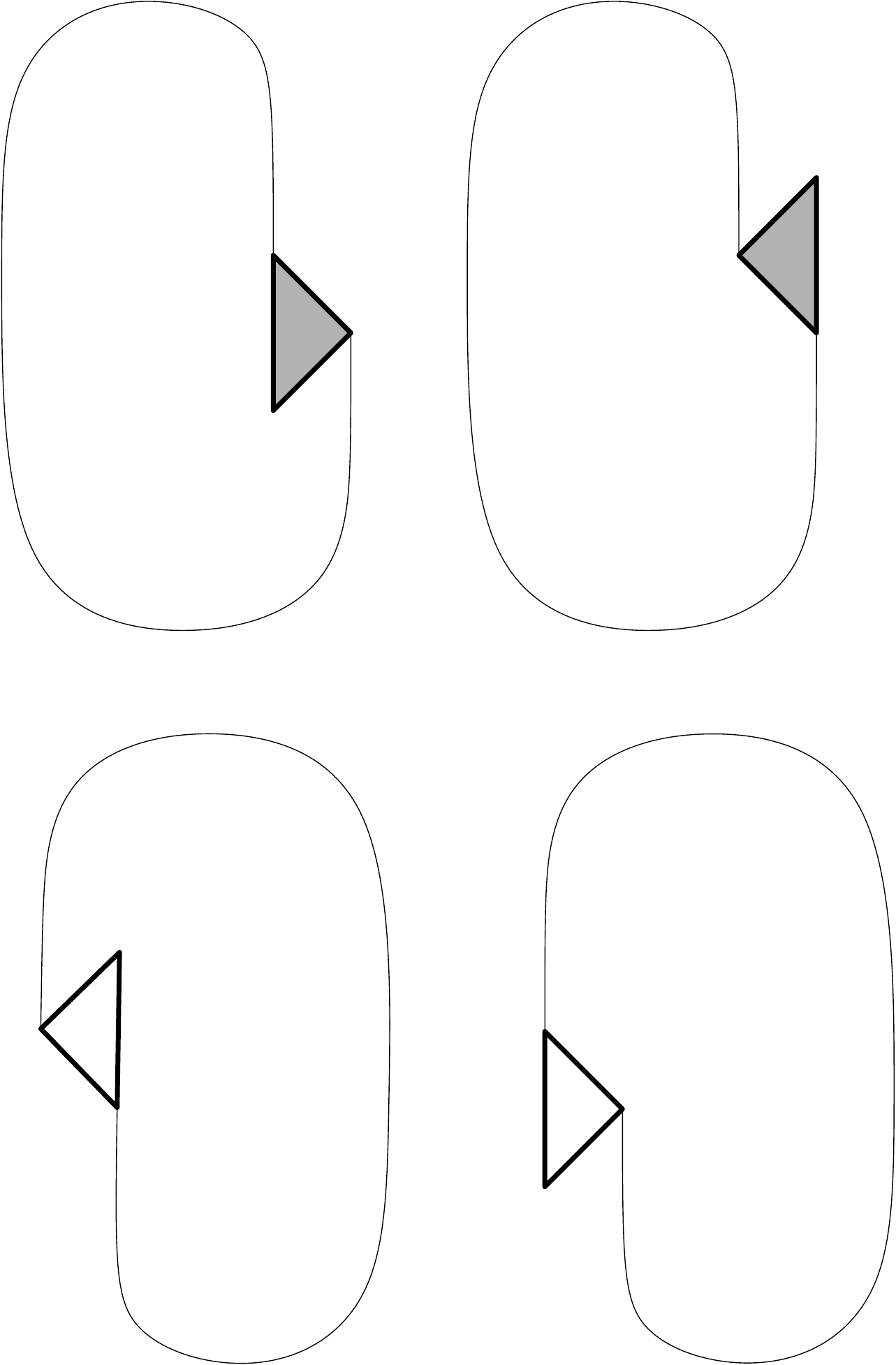}
	\caption{Types of polyiamonds by the largest/smallest cells}
	\label{fig:polyiamond-types}
\end{figure}

Also in \cite{barequet2019improved}, we observe that a polyiamond of type $\RHD$ can be concatenated (by a similar strategy to polyominoes) to a polyiamond of type $\lhd$ only, and a polyiamond of type $\LHD$ can be concatenated to a polyiamond of type $\rhd$ only. The former can be done in only one way, but we allow the latter to be done in two ways, as depicted in Fig.~\ref{fig:polyiamond-concatenations}.

\begin{figure}[ht]
	\includegraphics[width=0.4\textwidth]{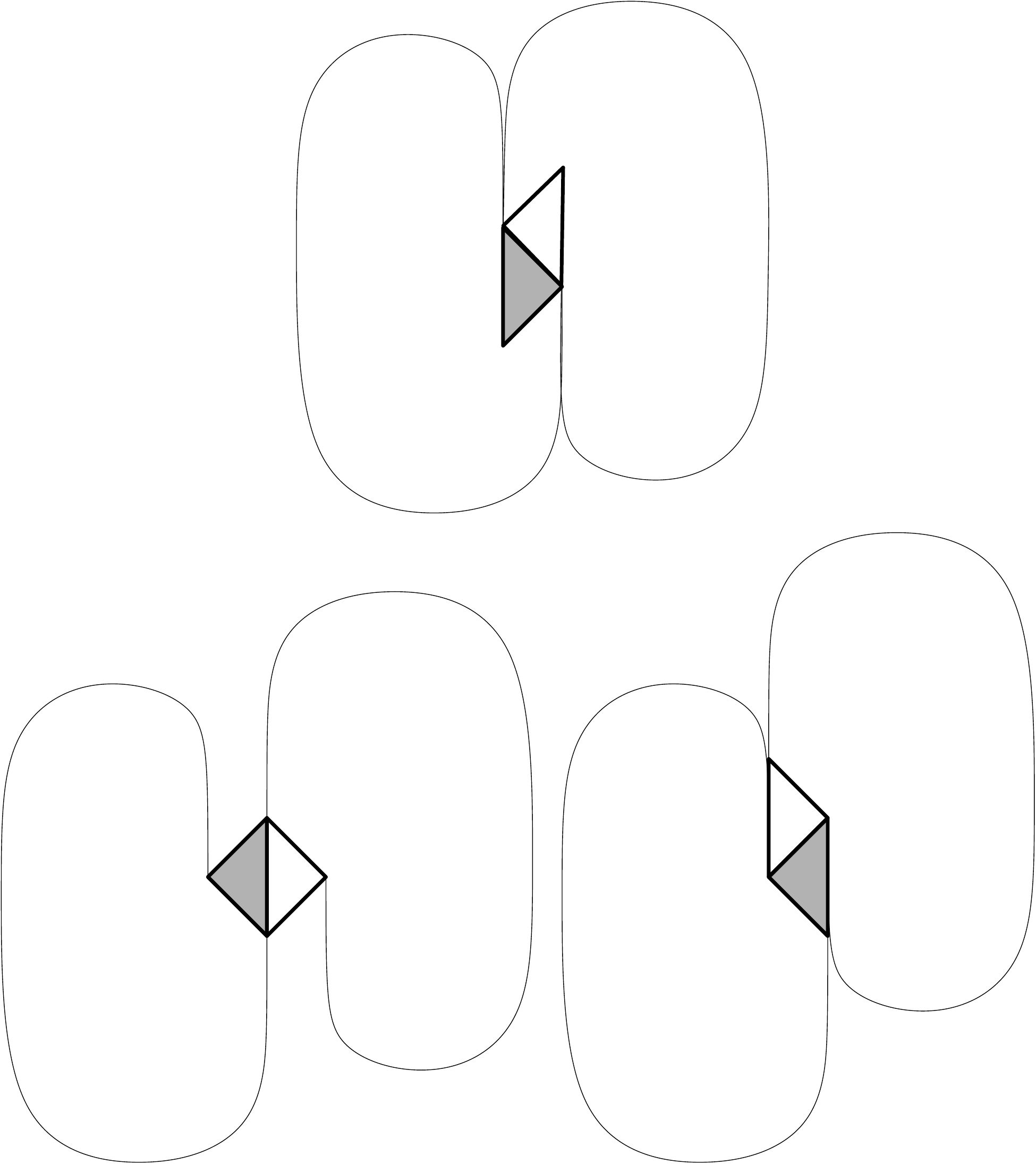}
	\caption{Concatenations of polyiamonds}
	\label{fig:polyiamond-concatenations}
\end{figure}

Although not every pair of polyiamonds can be concatenated in the traditional way, a concatenation argument in \cite{barequet2019improved} gives the following result.
\begin{proposition}[Barequet, Shalah, and Zheng 2019]
	For every $n$,
	\begin{equation} \label{eq:known-bound-polyiamond}
		T(2n)\ge \frac{2}{3} [T(n)]^2.
	\end{equation} 
\end{proposition}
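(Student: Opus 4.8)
The plan is to imitate the polyomino concatenation argument but to use every concatenation available above, and then to absorb the resulting imbalance with a perfect square.

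First I would set $a:=T_\RHD(n)$ and $b:=T_\LHD(n)$, so that $T(n)=a+b$, and recall from the discussion above that $T_\lhd(n)=T_\RHD(n)=a$ and $T_\rhd(n)=T_\LHD(n)=b$. I would then produce polyiamonds of $2n$ cells in two families. Family~I: for every ordered pair $(A,B)$ of $n$-cell polyiamonds with $A$ of type $\RHD$ and $B$ of type $\lhd$, take the unique concatenation; this uses $a\cdot a=a^2$ pairs. Family~II: for every ordered pair $(A,B)$ with $A$ of type $\LHD$ and $B$ of type $\rhd$, take each of the two concatenations; this uses $b\cdot b=b^2$ pairs and so yields $2b^2$ polyiamonds, counted with multiplicity.

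Next --- and this is the crux --- I would show that the $a^2+2b^2$ polyiamonds so produced are pairwise distinct, i.e.\ that from the resulting $2n$-cell polyiamond one can recover $A$, $B$, and (in Family~II) which of the two gluings was used. As in the polyomino case the natural reconstruction is: the $A$-part consists of the $n$ lexicographically smallest cells and the $B$-part of the remaining $n$. Making this rigorous amounts to checking, for each of the three gluings in Figure~\ref{fig:polyiamond-concatenations}, that after the prescribed translations every cell of $A$ precedes every cell of $B$ lexicographically (so the cut is forced), and that the local configuration at the single shared edge then distinguishes the $\RHD$--$\lhd$ gluing from the two $\LHD$--$\rhd$ gluings. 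This geometric bookkeeping on the triangular lattice, with its two cell shapes, is where the real work lies; everything else is formal.

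Granting distinctness, $T(2n)\ge a^2+2b^2$. Since
\[
	a^2+2b^2-\tfrac23(a+b)^2=\tfrac13\bigl(a-2b\bigr)^2\ge 0,
\]
we conclude $T(2n)\ge\tfrac23(a+b)^2=\tfrac23[T(n)]^2$, as claimed.
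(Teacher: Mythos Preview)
Your proof is correct and follows essentially the same route as the paper's: both obtain $T(2n)\ge a^2+2b^2$ from the three concatenations in Figure~\ref{fig:polyiamond-concatenations} and then deduce $a^2+2b^2\ge\tfrac{2}{3}(a+b)^2$. The only difference is cosmetic---the paper proves this last inequality via AM--GM, whereas you write it directly as the nonnegative square $\tfrac{1}{3}(a-2b)^2$, which is arguably cleaner.
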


Let us quickly sketch the argument in \cite{barequet2019improved}. 
\begin{proof}
	Since all the three resulting polyiamonds in Fig.~\ref{fig:polyiamond-concatenations} are distinguishable, it follows that for every $n$,
	\[
		T(2n)\ge T_\RHD(n)T_\lhd(n) + 2T_\LHD(n)T_\rhd(n).
	\]
	As $T_\RHD(n)=T_\lhd(n), T_\LHD(n)=T_\rhd(n)$ and $T(n)=T_\RHD(n)+T_\LHD(n)$, we have
	\begin{align*}
		T(2n)&\ge [T_\RHD(n)]^2 + 2[T_\LHD(n)]^2 \\
		&= \frac{2}{3} [T_\RHD(n)]^2 + \frac{2}{3}[T_\LHD(n)]^2 + \frac{1}{3} [T_\RHD(n)]^2 + \frac{4}{3}[T_\LHD(n)]^2 \\
		&\ge \frac{2}{3} [T_\RHD(n)]^2 + \frac{2}{3}[T_\LHD(n)]^2 + 2\sqrt{\frac{1}{3}} T_\RHD(n) \sqrt{\frac{4}{3}} T_\LHD(n) \\
		&= \frac{2}{3} [T_\RHD(n) + T_\LHD(n)]^2\\
		&= \frac{2}{3} [T(n)]^2.\qedhere
	\end{align*}
\end{proof}

Relation \eqref{eq:known-bound-polyiamond} gives a lower bound for the growth constant $\lambda_T$ in terms of $T(n)$. Indeed, we have $\frac{2}{3} T(2n) \ge \left[\frac{2}{3} T(n)\right]^2$, i.e. $\sqrt[2n]{\frac{2}{3} T(2n)} \ge \sqrt[n]{\frac{2}{3} T(n)}$ for every $n$. In other words, the sequence $\left\{\sqrt[2^t\cdot n]{\frac{2}{3} T(2^t\cdot n)}\right\}_t$ is nondecreasing. It follows that $\lambda_T\ge \sqrt[n]{\frac{2}{3} T(n)}$ for every $n$. The actual values of $T(n)$, which is the sequence $A001420$ in The On-Line Encyclopedia of Integer Sequences, are known for all $n\le 75$ (see Page $479$ of \cite{jacobsen2009polygons}) with 
\[
	T(75)=15936363137225733301433441827683823.
\]
It gives the following bound in \cite{barequet2019improved}:\footnote{In \cite{barequet2019improved}, the lower bound is actually $2.8424$, as an approximation of $\sqrt[75]{\frac{2}{3} T(75)}$. But Barequet once hinted in a private communication that one would better use a strict lower bound than an approximation for the sake of rigor.}

\begin{equation} \label{eq:current-best}
	\lambda_T\ge \sqrt[75]{\frac{2}{3} T(75)} > 2.8423.
\end{equation}

In the same work \cite{barequet2019improved}, by assuming an unproved conjecture,\footnote{It is actually called an assumption in \cite{barequet2019improved}: For every $n$, we have $T(n+1)/T(n)\le 4$.} there is a stronger but conditional relation: \begin{equation} \label{eq:unproved}
	T(2n)\ge 0.7122 \, [T(n)]^2,
\end{equation}
which gives the corresponding tentative bound
\[
    \lambda_T\ge \sqrt[75]{0.7122 \cdot T(75)} > 2.8448.
\]

In the following section, we will present a simple proof that the limit of the growth constant exists and a stronger result on $T(n)$ that it is supermultiplicative.

\section{Main theorem}
\subsection*{A simple proof of the limit of the growth constant for polyiamonds}
Neither \eqref{eq:known-bound-polyiamond} nor \eqref{eq:unproved} is enough to prove the limit of $\sqrt[n]{T(n)}$, e.g. the sequence $T'(n)$ with $T'(2k)=3^k$ and $T'(2k+1)=2^{2k+1}$ (for any $k$) satisfies the relations in \eqref{eq:known-bound-polyiamond} and \eqref{eq:unproved} but we do not have the limit of $\sqrt[n]{T'(n)}$. However, we can adapt the argument in \cite{barequet2019improved} with little extra effort to make it more general (by not fixing $n$ but letting $\ell,m$ arbitrary) but weaker (by a constant $\frac{1}{3}$ instead of $\frac{2}{3}$) as in the following proposition.
\begin{proposition}\label{prop:proving-limit}
	For every $\ell,m$,
	\begin{equation}\label{eq:one-third}
		T(\ell+m)\ge \frac{1}{3} T(\ell)T(m).
	\end{equation}
\end{proposition}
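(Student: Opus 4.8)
The plan is to run the concatenation argument of \cite{barequet2019improved} without assuming $\ell=m$, and then to finish with an elementary inequality in place of its final AM--GM step. Put an arbitrary $\ell$-cell polyiamond on the left and an arbitrary $m$-cell polyiamond on the right. As in \cite{barequet2019improved}, a concatenation is possible precisely when the left piece has type $\RHD$ and the right piece has type $\lhd$ (yielding one polyiamond) or when the left piece has type $\LHD$ and the right piece has type $\rhd$ (yielding two polyiamonds), and, again by the same argument, all the resulting $(\ell+m)$-cell polyiamonds are pairwise distinct because both pieces and the gluing can be read back off. Hence $T(\ell+m)\ge T_\RHD(\ell)T_\lhd(m)+2\,T_\LHD(\ell)T_\rhd(m)$, and the reflection identities $T_\lhd(m)=T_\RHD(m)$, $T_\rhd(m)=T_\LHD(m)$ turn this into
\[
	T(\ell+m)\ \ge\ T_\RHD(\ell)\,T_\RHD(m)+2\,T_\LHD(\ell)\,T_\LHD(m).
\]

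Next I would reduce \eqref{eq:one-third} to a two-line computation. Set $P=T_\RHD(\ell)$, $Q=T_\LHD(\ell)$, $R=T_\RHD(m)$, $S=T_\LHD(m)$, so that $T(\ell)=P+Q$ and $T(m)=R+S$ since $T=T_\RHD+T_\LHD$. By the displayed bound it suffices to show $PR+2QS\ge\frac{1}{3}(P+Q)(R+S)$; clearing denominators and cancelling $PR+QS$, this is equivalent to $2PR+5QS\ge PS+QR$. It would follow immediately if one knew $P\ge Q$ and $R\ge S$, for then $PR\ge QR$ and $PR\ge PS$, so already $2PR\ge PS+QR$. (The same two inequalities give $(P-Q)(R-S)\ge 0$, hence even the sharper $T(\ell+m)\ge\frac{1}{2}T(\ell)T(m)$; the weaker constant $\frac{1}{3}$ only asks for the crude $2PR\ge PS+QR$.) Thus the proposition comes down to the monotonicity statement $T_\RHD(n)\ge T_\LHD(n)$ for every $n\ge1$, which, since $T_\LHD(n)=T_\RHD(n-1)$, says exactly that $T_\RHD$ is nondecreasing.

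I expect this monotonicity to be the only real obstacle, and I would prove it by an injection from the $\RHD$-type polyiamonds with $n-1$ cells into those with $n$ cells. Let $Q$ be such a polyiamond and let $x$ be its largest cell, of type $\RHD$, at the top of the rightmost column of $Q$. If that column contains a cell of the orientation opposite to $x$, glue on the $\RHD$ cell lying in the (empty) column one step to the right of such a cell: it becomes the new largest cell and is again of type $\RHD$. Otherwise every cell of the rightmost column has the same orientation as $x$, so the cell directly below $x$ is absent; glue it in, and since it lies below $x$ the largest cell is still $x$, so the type stays $\RHD$. In both cases the output has $n$ cells and type $\RHD$, and $Q$ is recovered --- by deleting the now-unique rightmost column in the first case, and by deleting the cell directly below the largest cell in the second --- the two families of outputs being distinguished by whether their rightmost column consists of a single cell. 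Writing this small case distinction out carefully, and checking that the recovery is well defined, is the delicate part; granting it, the one-line algebra combines with the concatenation bound to give $T(\ell+m)\ge\frac{1}{3}T(\ell)T(m)$, which is \eqref{eq:one-third}.
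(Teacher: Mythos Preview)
Your argument is correct and follows the same route as the paper: derive the concatenation bound $T(\ell+m)\ge T_\RHD(\ell)T_\RHD(m)+2\,T_\LHD(\ell)T_\LHD(m)$, reduce the desired inequality to $T_\RHD(n)\ge T_\LHD(n)$, and establish the latter by an injection showing that $T_\RHD$ is nondecreasing. The only substantive difference is the injection itself: the paper simply appends one cell directly below the \emph{smallest} cell of the polyiamond---this never collides with an existing cell, leaves the largest cell (hence the type $\RHD$) untouched, and is inverted by deleting the new smallest cell, with no case analysis at all. Your two-case construction at the rightmost column also works (once you fix canonically which $\LHD$ to glue next to in the first case), but it is unnecessarily delicate by comparison. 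Your aside that the same monotonicity already yields the sharper $T(\ell+m)\ge\tfrac12\,T(\ell)T(m)$ via $(P-Q)(R-S)\ge 0$ is a small improvement the paper does not record.
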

\begin{proof}
	We first observe that $T_\RHD(n)$ is a nondecreasing sequence, as $T_\RHD(n)\le T_\RHD(n+1)$ can be seen by an injection map from a polyiamond of $n$ cells to a polyiamond of $n+1$ cells (both of type $\RHD$): We simply add a cell just below the smallest cell of the polyiamond of $n$ cells. It follows that there are at least as many polyiamonds of $n$ cells of type $\RHD$ as those of type $\LHD$, since $T_\RHD(n)\ge T_\RHD(n-1) = T_\LHD(n)$ (when $n-1=0$, it is trivial). For every $\ell,m$, we have\footnote{In fact, we do not use two concatenations of two polyiamonds of types $\LHD$ and $\rhd$ as in Fig.~\ref{fig:polyiamond-concatenations} but one only is sufficient, i.e. we can start with $T(\ell+m) \ge T_\RHD(\ell)T_\RHD(m) + T_\LHD(\ell)T_\LHD(m)$. That is why we come up with the weaker constant $\frac{1}{3}$.}
	\begin{align*}
		3T(\ell+m) &\ge 3[T_\RHD(\ell)T_\RHD(m) + 2T_\LHD(\ell)T_\LHD(m)] \\
		&\ge T_\RHD(\ell)T_\RHD(m) + T_\RHD(\ell)T_\LHD(m) + T_\LHD(\ell)T_\RHD(m) + T_\LHD(\ell)T_\LHD(m) \\
		&= [T_\RHD(\ell)+T_\LHD(\ell)][T_\RHD(m)+T_\LHD(m)] \\
		&= T(\ell)T(m).\qedhere
	\end{align*}
\end{proof}
Although the constant $\frac{1}{3}$ of \eqref{eq:one-third} is weaker than the constant $\frac{2}{3}$ of \eqref{eq:known-bound-polyiamond} in bounding the growth constant, the former gives a proof of the limit of $\sqrt[n]{T(n)}$ (the sequence $\{\frac{1}{3} T(n)\}_n$ is supermultiplicative). To the awareness of the author, there is no other such simple proof of the limit of $\sqrt[n]{T(n)}$, even though the manipulations are so straightforward. (The only known proof is by ``pattern theorems'' in \cite{madras1999pattern}, which is fairly involved.)

\begin{corollary}
	The limit $\lambda_T=\sqrt[n]{T(n)}$ exists.
\end{corollary}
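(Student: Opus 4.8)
The plan is to derive the corollary immediately from Proposition~\ref{prop:proving-limit} via Fekete's lemma, observing that the constant $\tfrac13$ is harmless under the $n$-th root. First I would set $s_n := \tfrac13 T(n)$, which is a strictly positive real sequence. Rewriting \eqref{eq:one-third} as $\tfrac13 T(\ell+m)\ge \bigl(\tfrac13 T(\ell)\bigr)\bigl(\tfrac13 T(m)\bigr)$ shows that $s_{\ell+m}\ge s_\ell s_m$ for all $\ell,m\ge 1$, i.e.\ $\{s_n\}_n$ is supermultiplicative; equivalently, $a_n:=\log s_n$ is superadditive, $a_{\ell+m}\ge a_\ell+a_m$.

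Next I would invoke Fekete's lemma~\cite{fekete1923verteilung} for superadditive sequences: $\lim_{n\to\infty}\tfrac{a_n}{n}$ exists and equals $\sup_n \tfrac{a_n}{n}$ in $(-\infty,+\infty]$. To exclude the value $+\infty$, so that the limit is a genuine finite real number, I would note that $T(n)$ grows at most exponentially, say $T(n)\le c^n$ for an absolute constant $c$; this follows from an encoding argument in the spirit of Eden's bound for polyominoes, or simply by taking a spanning tree in the adjacency graph of a polyiamond, which has maximum degree $3$. Hence $\tfrac{a_n}{n}=\tfrac1n\log\bigl(\tfrac13 T(n)\bigr)$ is bounded above, and therefore $L:=\lim_{n\to\infty}\tfrac{a_n}{n}\in\mathbb{R}$ exists; equivalently $\lim_{n\to\infty}\sqrt[n]{\tfrac13 T(n)}=e^L$ exists and is finite.

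Finally, since $\sqrt[n]{T(n)}=\sqrt[n]{3}\cdot\sqrt[n]{\tfrac13 T(n)}$ and $\sqrt[n]{3}\to 1$, the limit $\lambda_T=\lim_{n\to\infty}\sqrt[n]{T(n)}$ exists and equals $e^L=\sup_n \sqrt[n]{\tfrac13 T(n)}$. There is essentially no obstacle here: all the content sits in Proposition~\ref{prop:proving-limit}, and the only point deserving a word of care is that we are \emph{not} claiming supermultiplicativity of $T(n)$ itself (only of $\tfrac13 T(n)$), but the constant factor washes out in the limit of $n$-th roots, exactly as in the discussion following \eqref{eq:current-best}.
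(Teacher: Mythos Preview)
Your proposal is correct and follows essentially the same approach as the paper: from Proposition~\ref{prop:proving-limit} one observes that $\{\tfrac{1}{3}T(n)\}_n$ is supermultiplicative, applies Fekete's lemma, and notes that the constant factor disappears under the $n$-th root. The only cosmetic difference is that the paper defers finiteness to Lunnon's bound $\lambda_T\le 4$ rather than sketching an encoding/spanning-tree argument.
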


Note that the fact that $\lambda_T$ is finite is already given in \cite{lunnon1972counting} with $\lambda_T\le 4$, using the encoding technique that was introduced by Eden \cite{eden1961two} to prove $\lambda_S\le 6.75$. 

\subsection*{The number of polyiamonds is  supermultiplicative}
It is in fact unnecessary to sacrifice the constant $\frac{2}{3}$ to obtain such a general form in Proposition \ref{prop:proving-limit}. Quite the contrary, we can improve the constant to $1$.
Indeed, while the proofs of the relations use concatenation arguments only, we can show that $T(n)$ is actually supermultiplicative by concatenating two polyiamonds together with merging and adding some certain cells at the same time. This even improves the unproved relation \eqref{eq:unproved}.

\begin{theorem} \label{thm:polyiamond}
	The number of polyiamonds is supermultiplicative, in the sense that if $\ell,m$ are not both $1$, then $T(\ell+m)\ge T(\ell)T(m)$.
\end{theorem}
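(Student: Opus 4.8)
The plan is to build an injection $\Phi$ from the $T(\ell)T(m)$ ordered pairs $(A,B)$ with $|A|=\ell$ and $|B|=m$ into the set of polyiamonds with $\ell+m$ cells. Classify a pair by the orientations of the largest cell of $A$ and of the smallest cell of $B$. When these orientations are \emph{opposite}, namely $(\RHD,\lhd)$ or $(\LHD,\rhd)$, the two boundary cells can be glued along an edge and $(A,B)$ is concatenable as in Fig.~\ref{fig:polyiamond-concatenations}; when they are \emph{equal}, namely $(\RHD,\rhd)$ or $(\LHD,\lhd)$, the two cells can only meet at a vertex, so the traditional concatenation is unavailable. Using $T_\RHD=T_\lhd$ and $T_\LHD=T_\rhd$, these four classes have sizes $T_\RHD(\ell)T_\RHD(m)$, $T_\LHD(\ell)T_\LHD(m)$, $T_\RHD(\ell)T_\LHD(m)$ and $T_\LHD(\ell)T_\RHD(m)$, which sum to $T(\ell)T(m)$. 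On the concatenable classes take $\Phi$ to be the ordinary concatenation; since a $(\LHD,\rhd)$-pair concatenates in \emph{two} distinguishable ways, these classes alone already account for $T_\RHD(\ell)T_\RHD(m)+2T_\LHD(\ell)T_\LHD(m)$ distinct polyiamonds, a surplus of $T_\LHD(\ell)T_\LHD(m)$ over the number of pairs in them that I will use as a collision budget.

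On a non-concatenable class the idea — concatenating, merging and adding at once — is to translate $B$ so that its smallest cell is \emph{merged} with (identified with) the largest cell of $A$, which is legitimate precisely because in this case the two cells have the same orientation. Since the cells of $A$ lie in columns not larger than that of the merged cell $c$ while those of $B$ lie in columns not smaller than that of $c$, the two overlap in $c$ alone, so $A\cup B$ is a connected polyiamond with exactly $\ell+m-1$ cells. One then \emph{adds} one cell at a canonical empty position determined by the seam — for definiteness one may take the cell directly below the smallest cell of $A\cup B$ — to obtain a polyiamond $\Phi(A,B)$ with $\ell+m$ cells. Because $\ell$ and $m$ are fixed, $\Phi$ is injective on the class: the added cell is recognisable (with the choice above it is simply the smallest cell of $\Phi(A,B)$), its removal returns $A\cup B$, the first $\ell$ cells of $A\cup B$ in lexicographic order are exactly $A$ (so $c$ is its $\ell$-th cell), and $B$ is then recovered as well. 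The hypothesis that $\ell$ and $m$ are not both $1$ — equivalently $\ell+m\ge 3$ — is what gives the step enough room; the corner cases $\min(\ell,m)=1$, where one of the pieces is a single cell, are checked directly, and the hypothesis really is needed, the bound being tight at $T(3)=T(1)T(2)=6$.

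It remains — and this is the heart of the matter — to bound the overlaps among the four images. The two non-concatenable images are disjoint, being told apart by the orientation of the merged cell; and a merge-and-add polyiamond differs from every concatenation in the column structure around the seam, because the merged cell $c$ sits in the same column as the cell forced next to it by the relation $T_\LHD(n)=T_\RHD(n-1)$ (the ``$\RHD$ just below'') or its mirror $T_\rhd(n)=T_\lhd(n-1)$, whereas the two cells bracketing the seam of a concatenation lie in adjacent columns. Executing this carefully — possibly after tuning where the added cell is placed in each non-concatenable class — leaves at most $T_\LHD(\ell)T_\LHD(m)$ forced coincidences, which are absorbed into the reserved second concatenations of the $(\LHD,\rhd)$ class. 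Then $\Phi$ is an injection, and therefore
\[
	T(\ell+m)\ \ge\ T_\RHD(\ell)T_\RHD(m)+T_\LHD(\ell)T_\LHD(m)+T_\RHD(\ell)T_\LHD(m)+T_\LHD(\ell)T_\RHD(m)\ =\ T(\ell)T(m).
\]
I expect this disjointness accounting to be the main obstacle: one has to read off the seam and the added cell \emph{intrinsically} from a bare polyiamond, tell a merged interface apart from a genuine edge-concatenation without ambiguity, and then count the unavoidable coincidences sharply enough to see that they fit inside the surplus $T_\LHD(\ell)T_\LHD(m)$.
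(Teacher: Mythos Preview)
Your proposal is a plan, not a proof: you yourself say ``I expect this disjointness accounting to be the main obstacle'' and ``possibly after tuning where the added cell is placed'', which is precisely the content of the theorem. The heuristic you offer for that obstacle is incorrect. You claim that merge--and--add outputs are distinguished from concatenations because ``the two cells bracketing the seam of a concatenation lie in adjacent columns'', but in the $(\RHD,\lhd)$ concatenation the cells $P(A)$ and $Q(B)$ lie in the \emph{same} column (one just below the other), so this criterion fails immediately. Your choice of adding the extra cell below the smallest cell of $A\cup B$ also puts it far from the seam, so no local property at the seam records it; and the ``collision budget'' of $T_\LHD(\ell)T_\LHD(m)$ second concatenations is a number, not a map --- to use it you would still need an explicit, injective rerouting of colliding pairs to those second concatenations, and you have not bounded the collisions at all, let alone matched them up.

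The paper's argument fills exactly this gap, and not with a collision budget. It introduces a preliminary Case~0: whenever the rightmost column of $A$ contains some $\LHD$ \emph{and} the leftmost column of $B$ contains some $\rhd$, place those two cells side by side (this in particular absorbs all $(\LHD,\rhd)$ pairs). Outside Case~0 one then does the single $(\RHD,\lhd)$ concatenation, and for the two same--orientation classes one merges and adds a cell \emph{at the seam}: just below the merged $\RHD$ in the $(\RHD,\rhd)$ case, or just above the top of $B$'s leftmost column in the $(\LHD,\lhd)$ case. The point of siphoning off Case~0 first is twofold: it guarantees that the position where the extra cell is added is actually empty, and it forces clean local invariants (type of the $\ell$-th cell, whether some leftmost columns of $C$ hold exactly $\ell$ cells, whether two adjacent cells sit immediately atop the $\ell$-th cell) that separate all four cases without any counting. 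That preliminary side--by--side case is the missing idea in your outline.
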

Note that $T(1)T(1)=2\cdot 2=4$ while $T(2)=3$. However, this is the only exception, for which we may still conclude that $T(n)$ is supermultiplicative.

As Fekete's lemma still works for this type of supermultiplicativity, the corresponding lower bound is $\lambda_T\ge \sqrt[n]{T(n)}$, and when $n=75$, we have
\[
	\lambda_T\ge \sqrt[75]{T(75)} > 2.8578,
\]
which improves the current best lower bound $2.8423$ in \eqref{eq:current-best}.

\begin{proof}
	As one of $\ell, m$ is at least $2$, we let $m\ge 2$. For any such pair of $\ell,m$, we construct an injective map such that for any two polyiamonds $A, B$ of respectively $\ell, m$ cells, the map gives a unique polyiamond $C$ of $n = \ell + m$ cells. This is sufficient for showing that $T(\ell+m) \ge T(\ell) T(m)$.

	Let us first consider the following case (denoted by Case 0) where we can easily deal with: The rightmost column of $A$ has a cell $\LHD$ and the leftmost column of $B$ has a cell $\rhd$. In this case, we translate $A$ and $B$ such that the largest $\LHD$ of $A$ touches the smallest $\rhd$ of $B$ on the left. This gives a connected polyiamond $C$ of $n$ cells. Fig.~\ref{fig:case0} depicts the case with the largest $\LHD$ of $A$ in the dark color and the smallest $\rhd$ of $B$ in white. Note that some leftmost columns of $C$ have $\ell$ cells while the remaining columns have $m$ cells. 

	\begin{figure}[ht]
		\includegraphics[width=0.25\textwidth]{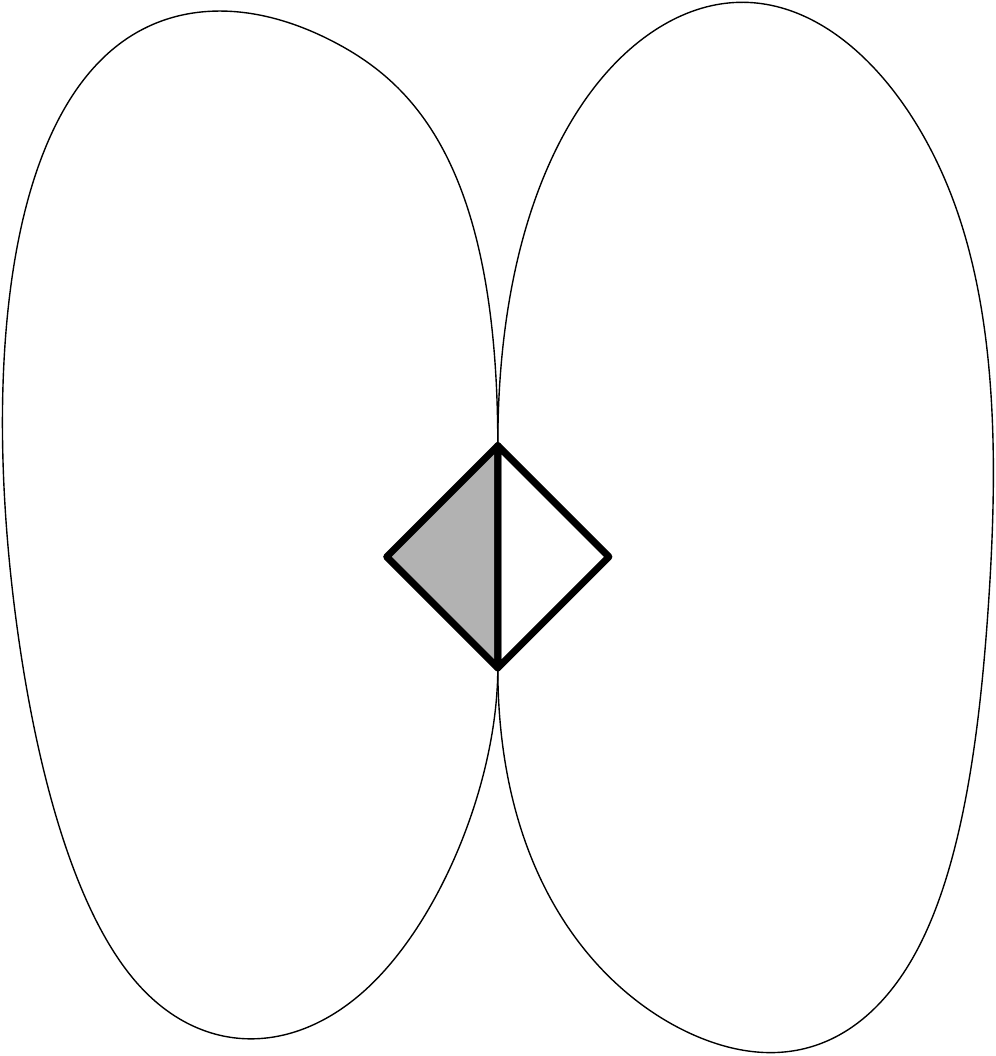}
		\caption{Case 0}
		\label{fig:case0}
	\end{figure}

	Before considering the three remaining cases where we assume that we \emph{do not have} Case 0, the reader may quickly check Table \ref{tbl:cases} of the properties that allow us to distinguish the resulting polyiamonds $C$.\footnote{An entry ``any'' means the value can be any, and depends on particular pairs of polyiamonds.} 
	\begin{table} 
		\begin{tabular}{|M{0.1\textwidth}|M{0.15\textwidth}|M{0.25\textwidth}|M{0.25\textwidth}|} 
			\hline
			Case & Type of the $\ell$-th cell & Some leftmost columns with precisely $\ell$ cells & $2$ adjacent cells immediately on top of the $\ell$-th cell \\ 
			\hline
			0 & any & Yes & No \\ 
			\hline
			1 & $\RHD$ & No & any \\ 
			\hline
			2 & $\LHD$ & No & Yes \\ 
			\hline
			3 & $\LHD$ & No & No \\ 
			\hline
		\end{tabular}
		\caption{Cases and properties}
		\label{tbl:cases}
	\end{table}
	Denoting by $P(A)$ the largest cell of $A$ and by $Q(B)$ the smallest cell of $B$, we consider the three remaining cases:
	\begin{enumerate}
		\item\label{case_a}
			If $P(A) = \RHD$ and $Q(B) = \lhd$, we translate $A, B$ such that $P(A)$ is just below $Q(B)$. This gives a connected polyiamond $C$ of $n$ cells, whose $\ell$-th cell is $\RHD$, and there are no leftmost columns of $C$ with precisely $\ell$ cells. Fig.~\ref{fig:case1} depicts this case, which is actually an extract of Fig.~\ref{fig:polyiamond-concatenations}.

			\begin{figure}[ht]
				\includegraphics[width=0.25\textwidth]{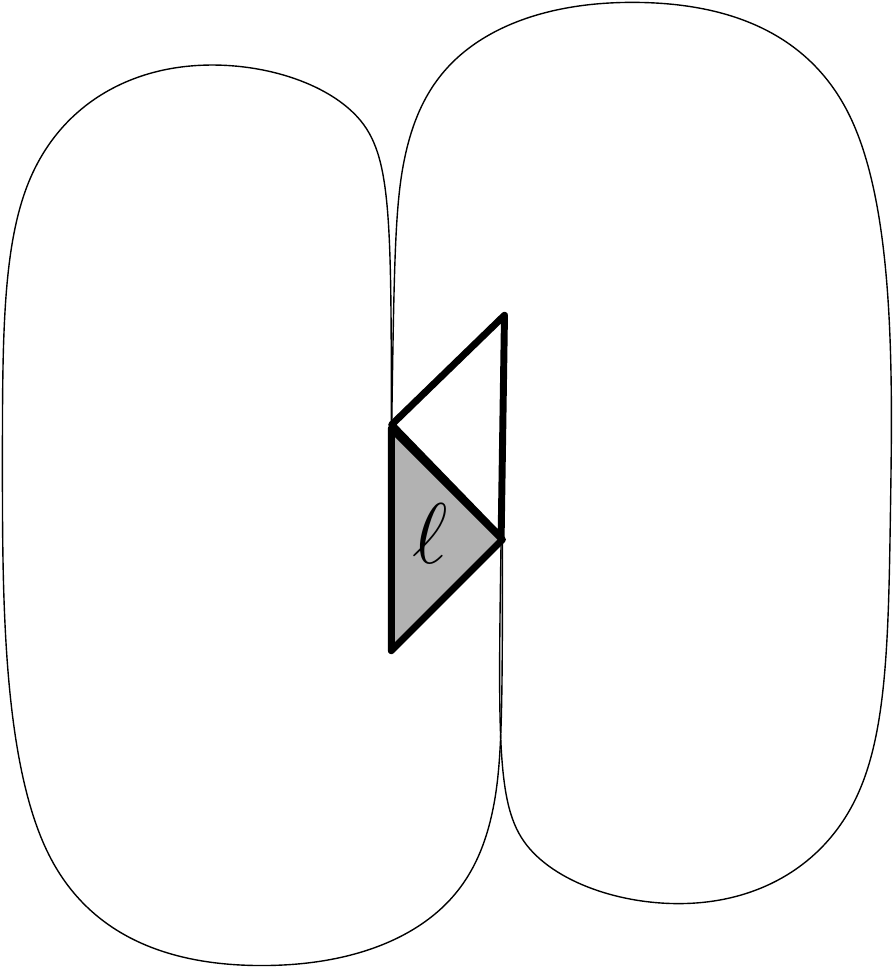}
				\caption{Case 1}
				\label{fig:case1}
			\end{figure}

		\item\label{case_b}
			If $P(A) = \RHD$ and $Q(B) = \rhd$, we translate $A, B$ such that $P(A)$ coincides with $Q(B)$. As we do not have Case 0, there is no cell $\LHD$ on the rightmost column of $A$, hence we can add a cell $\LHD$ just below the position of $P(A)$ (and now also of $Q(B)$) in order to have a polyiamond $C$ of $n$ cells. Fig.~\ref{fig:case2} depicts the case with $P(A)$ and $P(B)$ both situating at the stroked cell, which is the $(\ell+1)$-st cell of $C$. 
            On top of $Q(B)=\rhd$, there is another cell $\lhd$ because the cell $\rhd$ needs a cell $\lhd$ just on top of it to get connected to the rest of $B$ (note that $m>2$ and this is the only place we need the assumption). Meanwhile, the newly added cell $\LHD$ is the $\ell$-th cell of $C$, whose position is just below $P(A)=Q(B)$. In other words, there are at least two adjacent cells immediately on top of the $\ell$-th cell.

			\begin{figure}[ht]
				\includegraphics[width=0.25\textwidth]{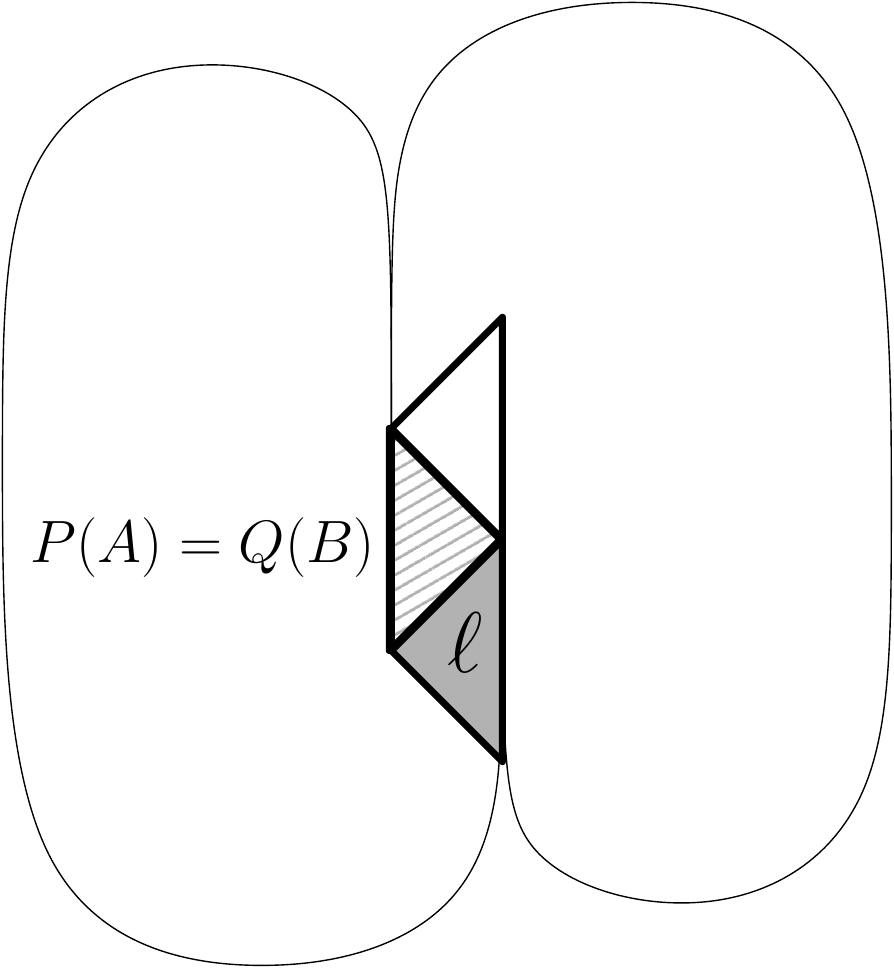}
				\caption{Case 2}
				\label{fig:case2}
			\end{figure}

		\item\label{case_c}
			If $P(A) = \LHD$ and $Q(B) = \lhd$, we translate $A, B$ such that $P(A)$ coincides with $Q(B)$. After that, we add a cell just on top of the largest cell in the leftmost column of $B$. This gives a connected polyiamond $C$ of $n$ cells, whose $\ell$-th cell is a cell $\LHD$ (originally $P(A)$ and $Q(B)$ before merging). 
   
            Fig.~\ref{fig:case3} depicts the two cases: (i) there are more than one cell on the column of $Q(B)$ in $B$ and (ii) $Q(B)$ is the only cell in its column in $B$. In the case (i), the largest cell in the leftmost column of $B$ is depicted as a cell $\lhd$ in white color. In both cases, $P(A)$ and $Q(B)$ both situate at the stroked cell, and the newly added cell $\RHD$ is in black color. 
            
            We remark that there are no two adjacent cells immediately on top of the $\ell$-th cell of $C$. It is because either the position just on top of the $\ell$-th cell of $C$, which is formerly $Q(B)$, is empty in the case (i) (due to not having Case 0), or there is only one cell above the $\ell$-th cell of $C$ in the case (ii). (In the case (i), the two top-most cells in the column of the $\ell$-th cells of $C$ are above the $\ell$-th cell but not \emph{immediately on top} of the $\ell$-th cell.)
			\begin{figure}[ht]
				\includegraphics[width=0.6\textwidth]{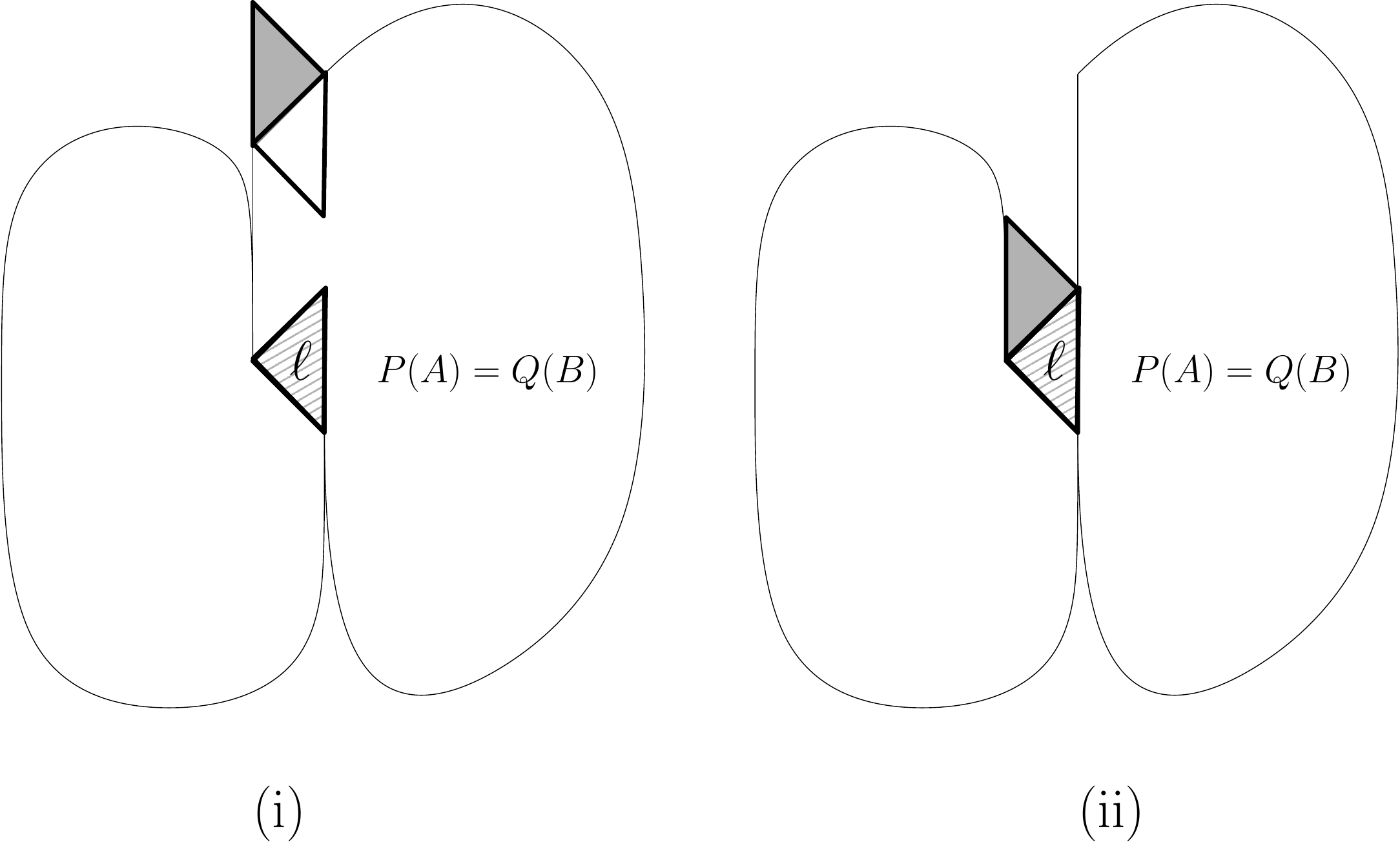}
				\caption{Case 3}
				\label{fig:case3}
			\end{figure}
	\end{enumerate}

	All the cases were covered, and we can distinguish them by Table \ref{tbl:cases}. Therefore, the number of polyiamonds is supermultiplicative in the sense stated in the conclusion.
\end{proof}

\subsection*{Dual representation with the supermultiplicativity in the original sense}
We do not have the supermultiplicativity in the original sense for $T(n)$ since $T(1)=2$ makes $T(1)T(1)>T(2)$ (the only exception). However, we show that in the dual representation the value of $T(1)$ would be $1$ while all other $T(n)$ for $n\ge 2$ remain the same values.

At first, we can see a polyomino in a different way: A polyomino is a set of points in the square lattice so that the induced graph by the points is connected. This dual representation is illustrated in Fig.~\ref{fig:polyomino-lattice}.

\begin{figure}[ht]
	\includegraphics[width=0.5\textwidth]{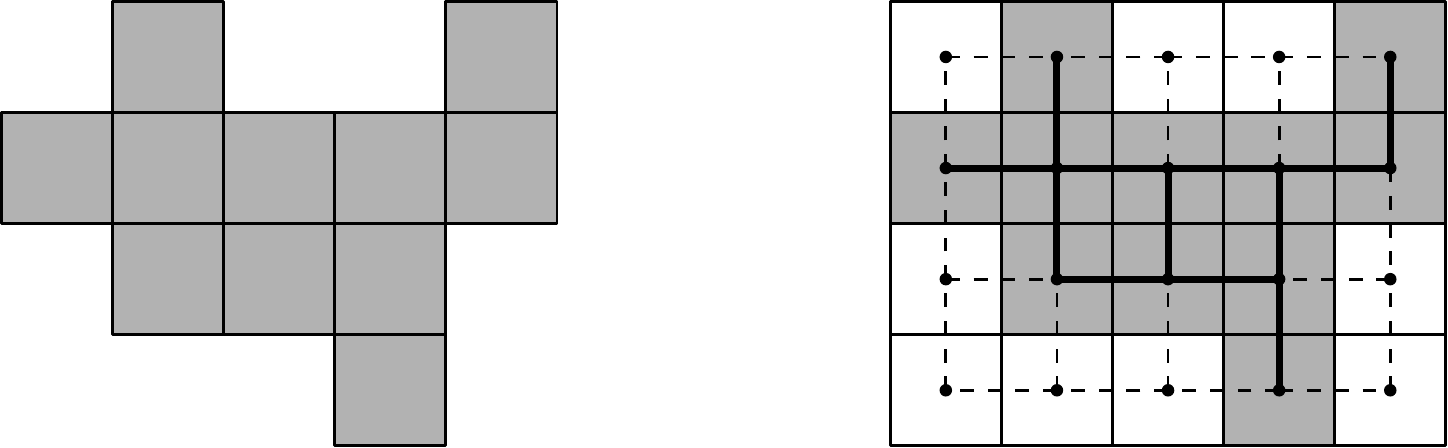}
	\caption{A polyomino with its dual presentation}
	\label{fig:polyomino-lattice}
\end{figure}

The situation for triangular lattice is a bit different. The dual representation of the triangular lattice is the hexagonal lattice (the honeycomb lattice): Each triangle cell in the triangular lattice corresponds to a point in the hexagonal lattice and the edge connectedness of the triangle cells are presented by the edges connecting the points in the hexagonal lattice. The dual representation is illustrated in Fig.~\ref{fig:polyiamond-lattice}.

\begin{figure}[ht]
	\includegraphics[width=0.4\textwidth]{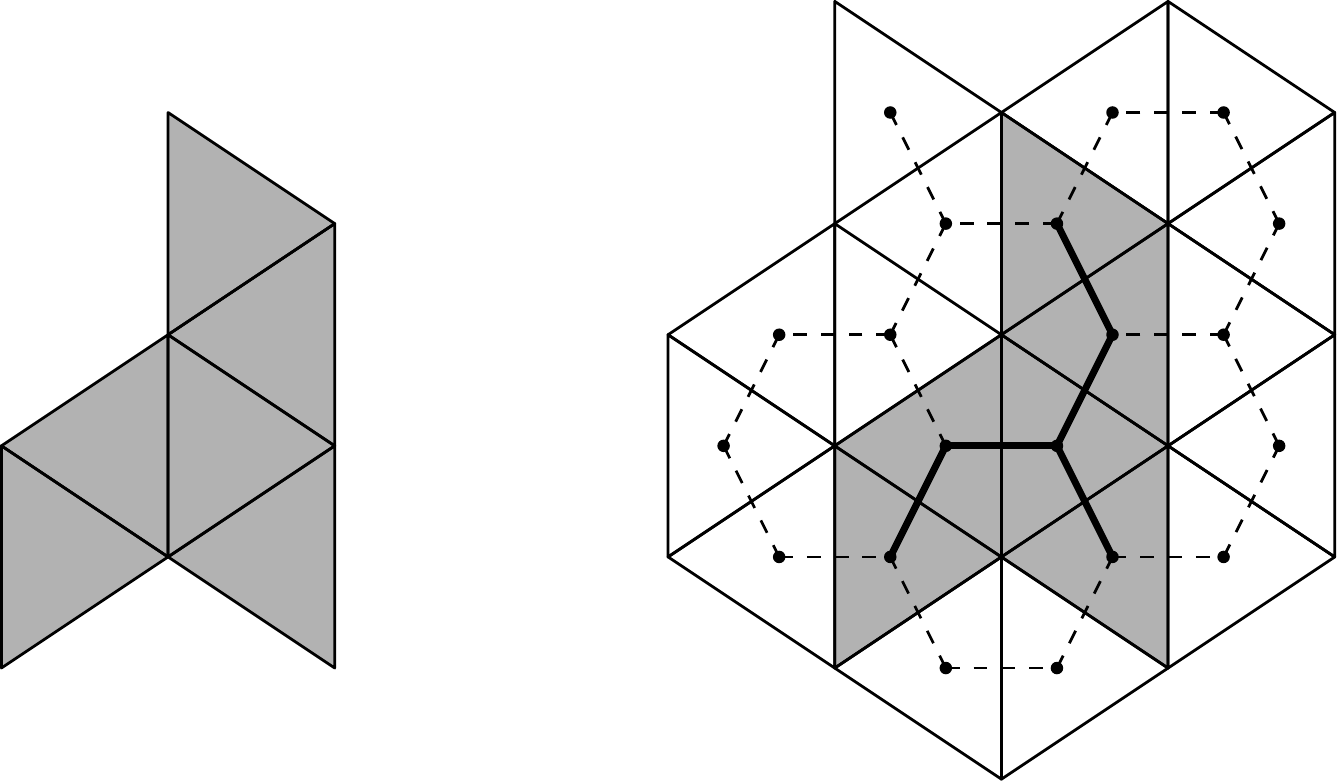}
	\caption{A polyiamond with its dual presentation in the hexagonal lattice}
	\label{fig:polyiamond-lattice}
\end{figure}

We would remark here that the choice of representation affects the value of $T(1)$.
In the original representation by the cells in the triangular lattice, there are two polyiamonds of one cell with opposite directions. In the representation by the points in the hexagonal lattice, the number would be one, as every point is a translate of any other point.
However, both representations give the same value of $T(n)$ for any $n\ge 2$. The reason is that any edge in the hexagonal lattice determines uniquely the types of the corresponding cells of the endpoints in the triangular lattice and the positions of the cells. Indeed, there are three classes of edges (which are unique up to a translation): upward, horizontal, downward (with respect to the direction from left to right). The corresponding cells with the positions are: a $\RHD$ is below a $\LHD$ (upward), a $\LHD$ is to the left of a $\RHD$ (horizontal), a $\RHD$ is above a $\LHD$ (downward).

\subsection*{Acknowledgments}
The author would like to thank G\"unter Rote and Gill Barequet for some interesting discussions and their helpful comments on an early draft of this paper.

\bibliographystyle{unsrt}
\bibliography{polyia-super}

\begin{thebibliography}{1}

\bibitem{jacobsen2009polygons}
Anthony~J. Guttmann.
\newblock {\em Polygons, Polyominoes and Polycubes}.
\newblock Springer Dordrecht, 2009.

\bibitem{klarner1967cell}
David~A. Klarner.
\newblock Cell growth problems.
\newblock {\em Canadian Journal of Mathematics}, 19:851--863, 1967.

\bibitem{fekete1923verteilung}
Michael Fekete.
\newblock {{\"U}ber die Verteilung der Wurzeln bei gewissen algebraischen
  Gleichungen mit ganzzahligen Koeffizienten}.
\newblock {\em Mathematische Zeitschrift}, 17(1):228--249, 1923.

\bibitem{eden1961two}
Murray Eden.
\newblock A two-dimensional growth process.
\newblock {\em Berkeley Symposium on Mathematical Statistics and Probability},
  4:223--239, 1961.

\bibitem{madras1999pattern}
Neal Madras.
\newblock A pattern theorem for lattice clusters.
\newblock {\em Annals of Combinatorics}, 3(2):357--384, 1999.

\bibitem{barequet2019improved}
Gill Barequet, Mira Shalah, and Yufei Zheng.
\newblock An improved lower bound on the growth constant of polyiamonds.
\newblock {\em Journal of Combinatorial Optimization}, 37(2):424--438, 2019.

\bibitem{lunnon1972counting}
W.~F. Lunnon.
\newblock Counting hexagonal and triangular polyominoes.
\newblock In {\em Graph Theory and Computing}, pages 87--100. Elsevier, 1972.

\end{thebibliography}

\end{document}